\documentclass{article} 

\usepackage[utf8]{inputenc}
\usepackage{hyperref}
\usepackage{geometry}
\usepackage{pdfpages}
\usepackage{url}
\usepackage{amssymb}
\usepackage{amsthm}
\usepackage{amsmath}
\usepackage{setspace}
\usepackage{listings}
\usepackage[normalem]{ulem}
\usepackage{algorithmicx}
\usepackage[noend]{algpseudocode}
\usepackage{varwidth}
\usepackage{datetime}

\newtheorem{theorem}{Theorem}
\newtheorem{corollary}{Corollary}
\newtheorem{lemma}{Lemma}

\newcommand{\il}[1]{\text{\textbf{IL}\textsf{#1}}}

\newcommand{\sDeltapure}[0]{\Xi}
\newcommand{\sDeltacrt}[0]{\Delta}
\newcommand{\sDeltanp}[0]{\Delta^+}
\newcommand{\sDeltanm}[0]{\Delta^-}
\newcommand{\sGammai}[0]{\Gamma^\rhd}
\newcommand{\sGammast}[0]{\Gamma}
\newcommand{\sGammash}[0]{\Gamma_0}
\newcommand{\sGammapure}[0]{S}

\title{Complexity of the interpretability logic \il{}}
\author{Luka~Mikec\footnote{Department of Mathematics, Faculty of Science, University of Zagreb, Croatia\newline \hspace*{5.2mm}Supported by Croatian Science Foundation (HRZZ) under the project UIP-05-2017-9219.} \\
\texttt{luka.mikec@math.hr}
\and
Fedor~Pakhomov\footnote{Steklov Mathematical Institute of Russian Academy of Sciences, Moscow, Russia\newline \hspace*{5.2mm}National Research University Higher School of
Economics, Moscow, Russia \newline \hspace*{5.2mm}Supported in part by Young Russian Mathematics award}\\ 
\texttt{pakhfn@mi.ras.ru}
\and
Mladen~Vukovi\'{c}\footnote{Department of Mathematics, Faculty of Science, University of Zagreb, Croatia}\\
\texttt{vukovic@math.hr}}

\newcommand{\mydate}{\formatdate{2}{4}{2018}}
\date{\mydate}

\begin{document}
\maketitle

\begin{abstract}
\noindent
We show that the decision problem for the basic system of interpretability logic \il{} is \texttt{PSPACE}--complete. For this purpose we 
present an algorithm which uses polynomial space w.r.t.\ the complexity of a given formula. The existence of such algorithm, together with the previously known 
\texttt{PSPACE}--hardness of the closed fragment of \il{}, implies \texttt{PSPACE}--completeness. 
\end{abstract}

Keywords: interpretability logic, Veltman semantics, decidability, complexity, PSPACE.

\section*{Introduction}\label{S:one}

Computational complexity of modal logics was first studied by Ladner \cite{ladner}.
Various tableau--based methods were used in proofs of \texttt{PSPACE}--decidability of a number of modal logics (like {\bf K}, {\bf K4}, 
{\bf S4} etc; see \cite{ladner} and \cite{spaan}).
\texttt{PSPACE}--completeness of the satisfiability problem (and also of the decision problem,
since co-\texttt{PSPACE} = \texttt{PSPACE}) for the closed fragments of modal systems {\bf K4}, {\bf S4}, {\bf Grz} and 
{\textbf {GL}} is proved by Chagrov and Rybakov  \cite{chagrov-rybakov}.
Shapirovsky \cite{shapirovsky} proved the \texttt{PSPACE}--decidability of propositional polymodal provability logic {\bf GLP}.
\texttt{PSPACE}--completeness of the closed fragment of the system {\bf GLP} is proved by Pakhomov in \cite{pakhomov}.

The interpretability logic \il{},  introduced by Visser \cite{Visser},  is an extension of provability logic with a binary modal  operator 
$\rhd.$ This operator stands for interpretability, considered as a relation between extensions of a fixed theory. In this paper we focus on 
modal aspects of interpretability logic. For details on arithmetical aspects see e.g.\ \cite{Visserover}.
Bou and Joosten proved in \cite{bou-joosten} that the decidability problem for the closed fragment of \il{} is \texttt{PSPACE}--hard.

We consider the complexity problem for interpretability logic and prove that the system \il{} is \texttt{PSPACE}--complete. 
Our constructions can be seen as generalizations of the constructions by Boolos presented in \cite{boolos} (Chapter 10). If we restrict our 
work to \textbf{GL}, the resulting method is very similiar to the one given by Boolos, up to the terminology. Our method can also be seen as extending the method presented in \cite{shapirovsky}, of proving 
\texttt{PSPACE}--completeness (monomodal case). 

\section{Preliminaries}

The language of interpretability logics is given by 
\[A::=p\,|\,\bot\,|\,A\to A\,|\,A\rhd A,\] 
where $p$ ranges over a fixed set of propositional variables. Other Boolean connectives can be defined as abbreviations as usual. 
Also, $\Box A$ can be defined as an abbreviation (it is provably equivalent to $\neg A\rhd\bot$), thus making this language an extension of 
the basic modal language. 
From this point on, by $\square A$ we will mean $\neg A\rhd\bot$, and by $\Diamond A$ we will mean $\neg(A\rhd\bot).$\\

Provability logic {\bf GL} is a modal logic with standard Kripke-style semantics (validity on transitive and reverse well--founded frames), 
which is sound and complete with respect to its interpretation in 
PA, where $\square$ is interpreted as a (formalized) provability predicate.
The axioms of {\bf GL} are all instances of the following schemata:
\begin{enumerate}
\item classical tautologies;
\item $\Box (A\rightarrow B)\rightarrow (\Box A\rightarrow \Box B)$;
\item $\Box (\Box A\rightarrow A)\rightarrow \Box A$.
\end{enumerate}
The rules of inference are:
\begin{enumerate}
\item $\displaystyle\frac{A\to B\;\;\; A}{B}$ (Modus ponens)
\item $\displaystyle\frac{A}{\Box A}$ (Necessitation)
\end{enumerate}  

Axioms of interpretability logic \il{} are all axioms of {\bf GL} and all the instances of the following schemata:
\begin{enumerate}
\item $\Box (A\rightarrow B)\rightarrow A\rhd B$;
\item $(A\rhd B)\ \wedge \ (B\rhd C)\rightarrow A\rhd C$;
\item $(A\rhd C) \ \wedge \ (B\rhd C) \rightarrow A\vee B\rhd C$;
\item $A\rhd B\rightarrow(\Diamond A \rightarrow \Diamond B)$;
\item $\Diamond A \rhd A$.
\end{enumerate}
As usual, we avoid parentheses if possible, treating $\rhd$ as having higher priority than $\rightarrow,$ 
but lower than other logical connectives. The inference rules are the same as for \textbf{GL}. 

The basic semantics for interpretability logic \il{} is provided by Veltman models. 
A \textit{Veltman frame} is a triple $\mathfrak{F}=(W,R, \{S_x : x\in W \})$, where $W$ is a non--empty set, $R$ is a transitive and 
reverse well--founded relation on $W$ (i.e.\ $(W,R)$ is a {\bf GL}--frame) and for all $x\in W$ we have:

\begin{quote}
\begin{itemize}
\item[a)] if $uS_xv$ then $xRu$ and $xRv$;
\item[b)] the relation $S_x$ is transitive and reflexive on $\{y \in W : xRy\}$; 
\item[c)] if $xRuRv$ then $uS_x v$.
\end{itemize}
\end{quote}

A  \textit{Veltman model}  is $\mathfrak{M}=(\mathfrak{F},\Vdash )$, where $\mathfrak{F}$ is a
Veltman frame and $\Vdash$ is a forcing relation, which is defined as usual in atomic and Boolean cases, and $x\Vdash A\rhd B$ if and 
only if for all $u$ such that $xRu$ and $u\Vdash A$ there exists $v$ such that $uS_x v$ and $v\Vdash B$.
De~Jongh and Veltman \cite{deJongh-Veltman} proved the completeness of the system \il{} w.r.t.\ finite Veltman models. Thus, 
 \il{} is decidable.

A \textit{rooted Veltman model} $(\mathfrak{M},w)$ is a pair consisting of a Veltman model $\mathfrak{M}=(W,R,\{S_x\mid x\in W\})$ and world 
$w$ such that all worlds are $R$--accessible from $w$; we say that $(\mathfrak{M},w)$ is a model of formula $\varphi$ (set of formulas $\Phi$) 
if $\mathfrak{M},w\Vdash \varphi$ ($\mathfrak{M},w\Vdash \varphi$, for each $\varphi\in \Phi$).

For a Veltman model $\mathcal{M}$ and world $x$, the \textit{rooted submodel generated by $x$} is the rooted model $(\mathcal{N},x)$, where 
$\mathcal{N}$ is the restriction of $\mathcal{M}$ to the set of all worlds that are either $x$ itself or are $R$--accessible from $x.$

We will say that $S \subseteq T$ is maximal Boolean consistent (w.r.t.\ $T$) if
$S$ is $\subseteq$--maximal propositionally consistent\footnote{Uniformly substitute all subformulas of the form $C \rhd D$ in all formulas 
from $S$ with fresh propositional variables. We say that $S$ is consistent if thus obtained set of propositional formulas is consistent.} set, 
i.e.\ there are no propositionally consistent sets $S' \subseteq T$ such that $S \subsetneq S'$.

\section{\texttt{PSPACE} algorithm}

We will present a  $\mathtt{PSPACE}$--algorithm that given an $\mathsf{IL}$-formula $\delta$ checks whether there is a rooted 
Veltman model $(\mathfrak{M},w)$ of $\delta.$

Let us denote by $Sub(\delta)$ the set of subformulas of a formula $\delta.$
For given formula $\delta$ we define the following sets of formulas:
$$\begin{array}{rcl}
\sGammash & = & \{ \varphi : \text{there is a formula } \psi \text{ such that } \varphi \rhd\psi \in Sub(\delta) 
\text{ or } \psi\rhd\varphi\in Sub(\delta)\}\\
\mbox{}\\
\sGammapure & = & Sub(\delta) \cup \{ \bot\} \cup \{ \varphi\rhd\bot : \varphi\in \sGammash\}\\
\mbox{}\\
\sGammast & = & \sGammapure\cup\{\lnot\varphi\mid \varphi\in \sGammapure\}\\
\mbox{}\\
\sGammai & = & \{\varphi\rhd \psi \mid \varphi\rhd \psi \in S\}
\end{array}
$$

In order to prove the theorem we will give a $\mathtt{PSPACE}$--algorithm (in $|\delta|$) (1) that checks for a given set 
$\sDeltapure\subset \sGammast$ whether there is a rooted Veltman model $(\mathcal{M},w)$ of  $\sDeltapure$ (in order to give an answer for our 
formula $\delta$ we will apply it to the set $\{\delta\}$).
  
We will give our algorithm as main function (1) and supplementary functions (2) and (3) that could make recursive calls of each other and 
return either positive or negative answer ((1) makes only calls of (2), (2) makes only calls of (3), and (3) makes only calls of (1)). 
First we will give full description of computation process and specify what we are computing, but we will prove our claims about what we are 
computing only latter.

In order to compute (1) on a given input $\sDeltapure$, we consider each of its maximal Boolean consistent extensions 
$\sDeltacrt\subset \sGammast$  and make checks (2) of  whether there are rooted Veltman models $(\mathcal{M},w)$ of $\sDeltacrt$; we return 
positive result for (1) iff at least one of the checks return positive answer.

Now we describe how we make check (2). We consider sets 
\begin{equation}\label{Delta+-def}\sDeltanp=\sDeltacrt\cap\sGammai \ \mbox{ and } \ \sDeltanm=\{\varphi\mid \lnot \varphi\in \sDeltacrt\}\cap 
\sGammai.\end{equation}  
For each formula $\zeta\in \sDeltanm$ ($\zeta$ is of the form 
$\chi\rhd\eta$) we make the following check (3)  of whether there is a rooted Veltman model 
$(\mathcal{H}_{\zeta},h_{\zeta})$ of $\sDeltanp$, where $\zeta$ fails; we return positive answer for (2) iff all the answers for (3) are 
positive.

\vskip 2ex
We will say that a pair $(\Sigma, \Theta)$ is a $(\sDeltacrt,\zeta)$-pair if:
\begin{itemize}
\item $\Sigma, \Theta \subseteq\sGammash$;
\item $\eta\in \Sigma$, $\bot\not\in \Theta$;
\item for each $\varphi\rhd\psi \in \sDeltanp$, either $\varphi\in \Sigma$ or $\psi\in \Theta$.
\end{itemize}

In order to make check (3), we return positive answer if there is a  $(\sDeltacrt,\zeta)$-pair $(\Sigma, \Theta)$ such that all of the 
following holds (we make calls of (1) to check conditions below):
\begin{enumerate}
\item \label{c1} there is a rooted Veltman model $(\mathcal{E},e)$ of 
$\{\lnot \sigma,\sigma\rhd\bot\mid \sigma\in \Sigma\}\cup \{\chi,\chi\rhd\bot\}$;

\item \label{c2} for each $\theta\in \Theta$ there is a rooted Veltman model $(\mathcal{G}_{\theta},g_{\theta})$ of 
$\{\lnot \sigma,\sigma\rhd\bot\mid \sigma\in \Sigma\}\cup \{\theta,\theta\rhd\bot\}$.
\end{enumerate}

We will now proceed to prove that the algorithm above has the required properties. To do so, let us first verify that (1), (2) and (3) do what 
we described. 

First, the following lemma is obvious:
\begin{lemma}
\label{algcor1}
The following are equivalent: 
\begin{enumerate}
\item there exists a rooted Veltman model $(\mathcal{M},w)$ of $\sDeltapure\subseteq \sGammast$;
\item there exists a rooted Veltman model of some maximal Boolean consistent extension $\sDeltacrt\subseteq \sGammast$ of $\sDeltapure$.
\end{enumerate}
\end{lemma}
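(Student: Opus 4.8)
The plan is to prove the two implications separately, with (2)$\Rightarrow$(1) being immediate and (1)$\Rightarrow$(2) requiring a short but natural construction.

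For (2)$\Rightarrow$(1), suppose $(\mathcal{M},w)$ is a rooted Veltman model of some maximal Boolean consistent extension $\sDeltacrt\subseteq\sGammast$ of $\sDeltapure$. Since ``extension'' means $\sDeltapure\subseteq\sDeltacrt$, every formula of $\sDeltapure$ already lies in $\sDeltacrt$ and is therefore forced at $w$; hence $(\mathcal{M},w)$ is a model of $\sDeltapure$ as well. This direction uses nothing beyond monotonicity of ``being a model of'' under passing to a subset of the forced formulas.

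For (1)$\Rightarrow$(2), given a rooted Veltman model $(\mathcal{M},w)$ of $\sDeltapure$, the only reasonable candidate for the required extension is the full $\sGammast$-theory of $w$, namely $\sDeltacrt:=\{\varphi\in\sGammast\mid\mathcal{M},w\Vdash\varphi\}$. I would then verify three things. First, $\sDeltapure\subseteq\sDeltacrt$ and $(\mathcal{M},w)$ models $\sDeltacrt$: since $(\mathcal{M},w)$ models $\sDeltapure$ and $\sDeltapure\subseteq\sGammast$, every $\varphi\in\sDeltapure$ is forced at $w$ and thus lies in $\sDeltacrt$, while $(\mathcal{M},w)$ models $\sDeltacrt$ by the very definition of $\sDeltacrt$. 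Second, $\sDeltacrt$ is propositionally consistent in the sense of the preliminaries: I define a propositional valuation sending each variable $p$ to the truth value of $\mathcal{M},w\Vdash p$ and each fresh variable replacing a subformula $C\rhd D$ to the truth value of $\mathcal{M},w\Vdash C\rhd D$; a routine induction on formula complexity, treating $\rhd$-subformulas as atoms, shows this valuation satisfies the propositional translation of exactly those formulas forced at $w$, so the translation of $\sDeltacrt$ is satisfiable.

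Third, and this is the only step needing any care, I would check maximality, using that $\sGammast=\sGammapure\cup\{\lnot\varphi\mid\varphi\in\sGammapure\}$ is closed under negation. Take any $\chi\in\sGammast\setminus\sDeltacrt$; then $\mathcal{M},w\not\Vdash\chi$. If $\chi\in\sGammapure$, then $\lnot\chi\in\sGammast$ is forced at $w$, so $\lnot\chi\in\sDeltacrt$; if instead $\chi=\lnot\varphi$ with $\varphi\in\sGammapure$, then $\varphi$ is forced at $w$, so $\varphi\in\sDeltacrt$. Either way $\sDeltacrt\cup\{\chi\}$ contains a formula together with its negation, whose propositional translations are $q$ and $\lnot q$ for some (possibly fresh) variable $q$, hence is inconsistent; thus no propositionally consistent $\sDeltacrt'\subseteq\sGammast$ properly extends $\sDeltacrt$, and $\sDeltacrt$ is maximal Boolean consistent. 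The main (mild) obstacle throughout is purely bookkeeping, namely keeping the footnote's consistency notion respected — both in the satisfiability argument and in reading ``a formula with its negation is inconsistent'' as $q\wedge\lnot q$ after the $\rhd$-substitution — which is why the lemma is genuinely obvious.
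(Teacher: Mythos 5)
Your proof is correct. The paper offers no proof of this lemma at all---it is dismissed with ``the following lemma is obvious''---and your argument (taking $\sDeltacrt$ to be the full $\sGammast$-theory of the root $w$, verifying consistency via the propositional valuation induced by forcing at $w$, and maximality via the closure of $\sGammast$ under negation) is precisely the standard elaboration the authors leave implicit, so nothing further is needed.
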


\begin{lemma}
\label{algcor2}
Let $\sDeltacrt$ be a maximally propositionally consistent subset of $\sGammast$. The sets $\sDeltanp$ and $\sDeltanm$ are given by 
(\ref{Delta+-def}). The following are equivalent: 
\begin{enumerate}
\item there exists a rooted Veltman model of a maximal Boolean consistent set $\sDeltacrt\subseteq \sGammast$;
\item for all $\zeta\in \sDeltanm$, there is a rooted Veltman model 
$(\mathcal{H}_{\zeta},h_{\zeta})$ of $\sDeltanp$, where $\zeta$ fails.
\end{enumerate}
\end{lemma}
\begin{proof}
First assume that indeed there is a rooted Veltman model $(\mathcal{M}, w)$ of $\sDeltacrt$. 
It is easy to see that we could just put $(\mathcal{H}_{\zeta}, h_{\zeta})=(\mathcal{M}, w)$, for each $\zeta\in\sDeltanm$. 

In the other 
direction, suppose we have rooted Veltmam models with described properties $(\mathcal{H}_{\zeta}, h_{\zeta})$ for each $\zeta\in\sDeltanm$. 
In order to construct rooted Veltman model $(\mathcal{M},w)$ of $\sDeltacrt$, we take disjoint union all the models $\mathcal{H}_{\zeta}$, 
then merge worlds $h_{\zeta}$ in one world $w$ and put satisfaction of proposition variables in $w$ according to set $\sDeltacrt$.  

It is easy to prove by induction on the complexity of a formula $\varphi\in\Gamma$ that we have the following: $\mathcal{M},w\Vdash\varphi$
if and only if $\varphi\in\Delta.$ We consider only the case $\varphi=\psi_1\rhd \psi_2.$
Suppose $\mathcal{M},w\Vdash\varphi$. If $\varphi \not\in \Delta$ then $\neg \varphi \in \Delta$, and so 
$\varphi \in \Delta^-$. By assumption, $\varphi$ fails in $\mathcal{H}_{\varphi}$, so there is some $x$, $h_\varphi R_\varphi x$, with 
$\mathcal{H}_\varphi, x \Vdash \psi_1$ and for no $y$, $x S_{h_\varphi} y$, $\mathcal{H}_\varphi, y \Vdash \psi_2$. Since $x$ is included in 
$\mathcal{M}$ and $wRx$, this contradicts the assumption that $\mathcal{M},w\Vdash\varphi$. 
In the other direction, suppose $\varphi \in \Delta$, and $wRx$. Since $\varphi \in \Delta$, also $\varphi \in \Delta^+$. Since $wRx$, by 
construction, $x$ is in $\mathcal{H}_{\varphi'}$ for exactly one $\varphi'$. Since $\mathcal{H}_{\varphi'}$ is a model of $\Delta^+$, there is 
$y$ such that $x S_{h_{\varphi'}} y$ and $\mathcal{H}_{\varphi'},y \Vdash \psi_2$. But $y$ is included in $\mathcal{M}$, and we have $x S_{w} y$.
\end{proof}

\begin{lemma}
\label{algcor3}
Let $\sDeltacrt\subseteq\sGammast$ be a maximal propositionally consistent set, $\sDeltanp$ and $\sDeltanm$ -
be given by (\ref{Delta+-def}), 
and $\zeta=\chi\rhd\eta$ be a formula from $\sGammast$. The following are equivalent: 
\begin{enumerate}
\item there is a rooted Veltman model $(\mathcal{H}_{\zeta},h_{\zeta})$ of $\sDeltanp$, where $\zeta$ fails;
\item there is a $(\sDeltacrt,\zeta)$-pair $(\Sigma, \Theta)$ such that all of the following holds:
    \begin{enumerate}
    \item \label{c1} there is a rooted Veltman model $(\mathcal{E},e)$ of 
    $\{\lnot \sigma,\sigma\rhd\bot\mid \sigma\in \Sigma\}\cup \{\chi,\chi\rhd\bot\}$;
    \item \label{c2} for each $\theta\in \Theta$ there is a rooted Veltman model $(\mathcal{G}_{\theta},g_{\theta})$ of 
    $\{\lnot \sigma,\sigma\rhd\bot\mid \sigma\in \Sigma\}\cup \{\theta,\theta\rhd\bot\}$.
    \end{enumerate}
\end{enumerate}
\end{lemma}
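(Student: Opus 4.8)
The plan is to establish the two implications separately, relying throughout on the observation that $\zeta=\chi\rhd\eta$ fails at the root of a rooted model precisely when there is a \emph{critical witness}, i.e.\ a world $c$ with $h_\zeta R c$, $c\Vdash\chi$, and $v\not\Vdash\eta$ for every $v$ with $c\,S_{h_\zeta}\,v$. Two auxiliary facts will carry most of the weight. First, passing to the submodel generated by an $R$-world preserves the forcing of \emph{every} formula, $\rhd$-formulas included, because for any world $x$ in such a submodel both its set of $R$-successors and its relation $S_x$ are unchanged. Second, frame condition (c) together with transitivity of the relations $S_x$ turns $R$-ascents into $S_{h_\zeta}$-steps: if $h_\zeta R c$, $c\,S_{h_\zeta}\,d$ and $d R d'$, then $d\,S_{h_\zeta}\,d'$ and hence $c\,S_{h_\zeta}\,d'$.

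For $(2)\Rightarrow(1)$ I would construct $\mathcal{H}_\zeta$ by gluing. Take the disjoint union of $\mathcal{E}$ and of all the $\mathcal{G}_\theta$ with $\theta\in\Theta$, adjoin a fresh root $h_\zeta$ placed $R$-below every world, retain each inherited $R$ and each inherited $S_x$ for $x\neq h_\zeta$, and let $S_{h_\zeta}$ be the transitive closure of the inherited $R$ on the summands, the diagonal on the summands, and all pairs $(u,g_\theta)$ with $\theta\in\Theta$. Checking that this is a Veltman frame is routine; the only genuinely new verifications, reflexivity and transitivity of $S_{h_\zeta}$ and condition (c) for $h_\zeta$, are immediate. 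Because $\eta\in\Sigma$ and every summand models $\{\lnot\sigma,\sigma\rhd\bot\mid\sigma\in\Sigma\}$, the formula $\eta$ is forced at no world of any summand; since $e\Vdash\chi$ and all $S_{h_\zeta}$-successors of $e$ lie in the summands, $e$ is a critical witness and $\zeta$ fails at $h_\zeta$. Finally, for $\varphi\rhd\psi\in\sDeltanp$ the pair condition gives two cases: if $\varphi\in\Sigma$ then $\varphi$ is forced nowhere and the clause holds vacuously, while if $\psi\in\Theta$ then for every $u\Vdash\varphi$ the added edge $u\,S_{h_\zeta}\,g_\psi$ provides an $S_{h_\zeta}$-successor forcing $\psi$; hence $h_\zeta\Vdash\varphi\rhd\psi$.

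For $(1)\Rightarrow(2)$ I would read the pair off a critical witness $c$ of the given model. Set $\Theta=\{\theta\in\sGammash : \theta$ is forced at some $v$ with $c\,S_{h_\zeta}\,v\}$ and $\Sigma=\sGammash\setminus\Theta$; then $\Sigma,\Theta\subseteq\sGammash$, $\bot\notin\Theta$, and $\eta\in\Sigma$ since $c$ is critical. The last clause of being a $(\sDeltacrt,\zeta)$-pair is where transitivity is essential: if $\varphi\rhd\psi\in\sDeltanp$ and $\varphi\notin\Sigma$, choose $u$ with $c\,S_{h_\zeta}\,u$ and $u\Vdash\varphi$; as $h_\zeta\Vdash\varphi\rhd\psi$ there is $v$ with $u\,S_{h_\zeta}\,v$ and $v\Vdash\psi$, and $c\,S_{h_\zeta}\,v$ by transitivity, so $\psi\in\Theta$. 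For (c1) I start from $c$ itself, which forces $\chi$ and, since $S_{h_\zeta}$ is reflexive and $h_\zeta R c$, satisfies $c\,S_{h_\zeta}\,c$ and therefore forces no $\sigma\in\Sigma$; for (c2), given $\theta\in\Theta$, I start from a world forcing $\theta$ that is an $S_{h_\zeta}$-successor of $c$. In each case, reverse well-foundedness lets me climb $R$-upward to a topmost such world $c'$ (resp.\ $d'$), which then forces $\chi\rhd\bot$ (resp.\ $\theta\rhd\bot$); the ascent fact keeps $c'$ (resp.\ $d'$) and all its $R$-successors among the $S_{h_\zeta}$-successors of $c$, so they force no $\sigma\in\Sigma$ and thus $c'$ (resp.\ $d'$) forces each $\sigma\rhd\bot$. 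The submodel generated by $c'$ (resp.\ $d'$) then models the required set by the preservation fact.

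The principal difficulty I anticipate is bookkeeping rather than ideas: confirming that the glued structure really is a Veltman frame and, more importantly, that no construction step silently changes the truth value of a $\rhd$-formula. Both risks are contained by the two facts isolated at the outset, since generated submodels alter neither $R$-successor sets nor the relations $S_x$, and in the glue only $h_\zeta$ receives a freshly assigned valuation while every summand world keeps its $R$-successors and its $S_x$. The single place that demands real care is the topmost-witness step: one must check that climbing to $c'$ (resp.\ $d'$) preserves membership among the $S_{h_\zeta}$-successors of $c$ and that \emph{every} $R$-successor of the new root forces no $\sigma\in\Sigma$, which is exactly what condition (c) together with transitivity of $S_{h_\zeta}$ delivers.
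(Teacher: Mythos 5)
Your proof is correct and follows essentially the same route as the paper's: for (2)$\Rightarrow$(1), gluing the disjoint union of $\mathcal{E}$ and the $\mathcal{G}_{\theta}$ under a fresh root $h_{\zeta}$, and for (1)$\Rightarrow$(2), extracting $(\Sigma,\Theta)$ from the $S_{h_{\zeta}}$-successors of a critical witness, then using reverse well-foundedness to pick $R$-maximal witnesses and taking generated submodels. The only cosmetic differences are that the paper takes $S_{h_{\zeta}}$ to be simply the total relation on all non-root worlds rather than your transitive closure, and chooses the critical witness $R$-maximal from the outset instead of climbing afterwards.
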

\begin{proof}
First let us assume that we found a pair $(\Sigma,\Theta)$ with required properties and then show that there exists a rooted Veltman model 
$(\mathcal{H}_{\zeta},h_{\zeta})$ of $\sDeltanp$ in which $\chi\rhd\eta$ fails. We have rooted Veltman model $(\mathcal{E},e)$ of 
$\{\lnot \sigma,\sigma\rhd\bot\mid \sigma\in \Sigma\}\cup \{\chi,\chi\rhd\bot\}$ and rooted Veltman models $(\mathcal{G}_{\theta},g_{\theta})$ 
of $\{\lnot \sigma,\sigma\rhd\bot\mid \sigma\in \Sigma\}\cup \{\theta,\theta\rhd\bot\}$, for $\theta\in \Theta$. Now we consider the disjoint 
union of $\mathcal{E}$ and all $\mathcal{G}_{\theta}$ then add to the union new world $h_{\zeta}$, make all other worlds $R$-accessible from 
$h_{\zeta}$, and  make all the worlds other than $h_{\zeta}$ pairwise accessible by $S_{h_{\zeta}}$ (satisfaction of variables in $h_{\zeta}$ 
could be arbitrary). We take the resulting model as the desired model $\mathcal{H}_{\zeta}$. It is easy to see that all formulas from $\Sigma$ 
fail in every world of $\mathcal{H}_{\zeta}$ different from $h_{\zeta}$. Thus $\mathcal{H}_{\zeta},h_{\zeta}\Vdash\sigma\rhd\psi$, for all 
$\sigma$ from $\Sigma$. And since $\eta\in \Sigma$ and $\mathcal{H}_{\zeta},e\Vdash \chi$, the formula 
$\mathcal{H}_{\zeta},h_{\zeta}\nVdash \chi\rhd\eta$. Since for each $\theta\in \Theta$, the world $g_{\theta}$ is $S_{h_{\zeta}}$-accessible 
or each $\theta\in \Theta$ and arbitrary $\varphi$ we have $\mathcal{H}_{\zeta},h_{\zeta}\Vdash \varphi\rhd\theta$. Finally we conclude that 
$(\mathcal{H}_{\zeta},h_{\zeta})$ is indeed a model of $\sDeltanp$ in which $\chi\rhd\eta$ fails.

Now assume that we have a rooted Veltman model $(\mathcal{H}_{\zeta},h_{\zeta})$ of $\sDeltanp$ in which $\chi\rhd\eta$ 
fails. We now need to find pair of sets $(\Sigma,\Theta)$, models $(\mathcal{E},e)$, and models $(\mathcal{G}_{\theta},g_{\theta})$, for 
$\theta\in \Theta$  with all the desired properties from (3). Let $e$ be some $R$--maximal world of $\mathcal{H}_{\zeta}$ in which $\chi$ 
holds but for which there are no $S_{h_{\zeta}}$-accessible world, where $\eta$ holds. We consider the set $A$ of all $\mathcal{H}_{\zeta}$-
worlds that are $S_{h_{\zeta}}$-accessible from $e$. We take $\Sigma$ to be the set of all formulas from $\sGammash$ that fail in all the 
worlds from $A$ and $\Theta$ to be the set of all formulas from $\sGammash$ that hold at least in one world from $A$.  Now it is easy to see 
that we could take the rooted submodel  of $\mathcal{H}_{\zeta}$ generated by $e$ as $(\mathcal{E},e)$ (it is easy to see that it is a model 
of $\{\lnot \sigma,\sigma\rhd\bot\mid \sigma\in \Sigma\}\cup \{\chi,\chi\rhd\bot\}$). For each 
$\theta\in \Theta$ we put $g_\theta$ to be some $R$--maximal element of $A$ such that $\mathcal{H}_{\zeta},g_{\theta}\Vdash\theta$. We take as 
models $(\mathcal{G}_{\theta},g_{\theta})$ the rooted submodels of $(\mathcal{H}_{\zeta},h_{\zeta})$ generated by respective $g_{\theta}$. Due 
to the definition of $\Sigma$, all $(\mathcal{G}_{\theta},g_{\theta})$ are models of 
$\{\lnot \sigma,\sigma\rhd\bot\mid \sigma\in \Sigma\}$
and due to the choice of $g_{\theta}$'s we have 
$(\mathcal{G}_{\theta},g_{\theta})\Vdash \theta$ and $(\mathcal{G}_{\theta},g_{\theta})\Vdash \theta\rhd\bot$, for each 
$\theta\in \Theta$.
\end{proof}

\begin{theorem} Logic $\mathsf{IL}$ is $\mathtt{PSPACE}$--decidable. \end{theorem}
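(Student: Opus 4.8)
The plan is to combine the three correctness lemmas with a space analysis of the recursion. First I would observe that Lemmas \ref{algcor1}, \ref{algcor2} and \ref{algcor3} establish, respectively, that the functions (1), (2) and (3) return a positive answer exactly when the semantic object they are designed to detect exists. Chaining them, the top-level call of (1) on $\{\delta\}$ returns positive iff $\delta$ has a rooted Veltman model. By the completeness of \il{} with respect to finite Veltman models (de~Jongh--Veltman), $\delta$ is satisfiable in a Veltman model iff $\lnot\delta$ is not a theorem of \il{}; so the algorithm decides \il{}-satisfiability, and, since \texttt{PSPACE} is closed under complementation, also \il{}-theoremhood. It therefore remains only to show that the whole recursive procedure runs in space polynomial in $|\delta|$.

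Second, I would bound the space used by a single activation of (1), (2) or (3). Every set these functions manipulate --- the maximal Boolean consistent extension $\sDeltacrt$, the sets $\sDeltanp,\sDeltanm$, the components $\Sigma,\Theta\subseteq\sGammash$ of a $(\sDeltacrt,\zeta)$-pair, and the formulas $\zeta,\chi,\eta$ --- is a subset of one of the \emph{fixed} sets $\sGammast$ or $\sGammash$, whose cardinalities are $O(|\delta|)$. Each such subset is storable in $O(|\delta|)$ bits, and the required enumerations (over extensions, over pairs, over $\zeta\in\sDeltanm$, over $\theta\in\Theta$) are carried out one candidate at a time using a counter of $O(|\delta|)$ bits, reusing the same space. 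Hence a single stack frame occupies $\mathrm{poly}(|\delta|)$ space, and the total space is this quantity times the maximal depth of the recursion. The whole argument thus reduces to bounding that depth by a polynomial in $|\delta|$.

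Third --- and this is the step I expect to be the main obstacle --- I would bound the recursion depth by a suitable monotone measure. For a maximal Boolean consistent $\sDeltacrt$ put $F(\sDeltacrt)=\{\sigma\in\sGammash : \sigma\rhd\bot\in\sDeltacrt\}$, the set of formulas forced to fail at every $R$-successor of the root. I claim that $|F|$ strictly increases along every chain of nested calls $(1)\to(2)\to(3)\to(1)$, even on branches that will ultimately answer negatively. Indeed, when (3) recurses into (1) it does so on a set forcing $\chi\rhd\bot$ together with $\sigma\rhd\bot$ for every $\sigma\in\Sigma$, so the child's extension $\sDeltacrt'$ satisfies $\{\chi\}\cup\Sigma\subseteq F(\sDeltacrt')$. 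Conversely, the $(\sDeltacrt,\zeta)$-pair condition requires that for every $\varphi\rhd\psi\in\sDeltanp$ either $\varphi\in\Sigma$ or $\psi\in\Theta$; applied to $\sigma\rhd\bot\in\sDeltanp$, and using that $\bot\notin\Theta$ always, this forces $\sigma\in\Sigma$, so $F(\sDeltacrt)\subseteq\Sigma\subseteq F(\sDeltacrt')$ and no previously forbidden formula is released. Finally, on any branch that actually recurses the child's set is Boolean consistent, whence $\chi\notin\Sigma$ (the set contains both $\chi$ and $\lnot\sigma$ for $\sigma\in\Sigma$); combined with $F(\sDeltacrt)\subseteq\Sigma$ this gives $\chi\notin F(\sDeltacrt)$, so $F$ gains the genuinely new element $\chi$ and $|F|$ strictly increases. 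The same reasoning applies verbatim to the calls under condition (\ref{c2}), with $\theta\in\Theta\subseteq\sGammash$ in place of $\chi$. Since $F\subseteq\sGammash$ and $|\sGammash|\leq|\delta|$, the recursion depth is at most $|\delta|+1$.

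Putting the pieces together, the algorithm uses $O(|\delta|)\cdot\mathrm{poly}(|\delta|)=\mathrm{poly}(|\delta|)$ space and, by the correctness lemmas, decides \il{}-satisfiability; hence \il{} is $\mathtt{PSPACE}$-decidable. The delicate point, as flagged above, is precisely the depth bound: one must notice that the pair condition on the $\rhd\bot$-formulas enforces monotone growth of the forbidden set $F$ \emph{syntactically}, so that progress is guaranteed even on unsatisfiable branches, where propositional consistency alone would in principle allow the very same $\sDeltacrt$ to reappear and the recursion to diverge.
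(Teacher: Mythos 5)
Your proposal is correct and follows essentially the same route as the paper: correctness via the three lemmas (made rigorous by induction on the call tree, which termination licenses), polynomial space per stack frame with enumerations done via counters, and a depth bound obtained by showing that the $\rhd\bot$-formulas in the current maximal consistent set grow strictly along every chain of nested calls. Your monotone measure $F(\sDeltacrt)$ is exactly the paper's argument --- the paper likewise observes that the pair condition together with $\bot\notin\Theta$ forces $F(\sDeltacrt)\subseteq\Sigma$, and that if the newly added formula were already present the child input would be propositionally inconsistent and the branch would terminate --- so there is nothing to add.
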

\begin{proof}
First we show that the depth of recursion calls is bounded by $|\sGammai|+2$. Indeed, we notice that if some formula 
$\varphi\rhd\bot$ is in $\sDeltanp$ from some execution of (2) then $\varphi\rhd\bot$  will be in  
$\sDeltanp$ of all the deeper calls of (2). We will show that in each deeper call of (2) the new $\sDeltanp$ will contain at least one new 
formula of the form $\varphi\rhd\bot$; clearly this will give us bound $|\sGammai|+1$ on depth of recursion 
calls of (2) and hence the desired bound $|\sGammai|+2$ on depth of recursion 
calls of (1). Let us consider an execution of (3) and show that all the deeper calls of (2) that will be made from this point will contain 
some additional formula of the form $\varphi\rhd \bot$ in (new) $\sDeltanp$. In this execution of (3) all the further recursive calls are made 
either from check 1. or from check 2. for some $\theta\in \Theta$; note that each check here makes exactly one call of (1). Since the cases of 
checks 1. and 2. are very similar, we will consider only the call of (1) made from check 1. It is enough to show that if the formula 
$\chi\rhd\bot$ is already in $\sDeltanp$ then there will no deeper calls of (2) in this branch of computation. Indeed, in this case $\chi$ is 
in $\Sigma$ (otherwise $\bot$ should have been in 
$\Theta$ which is forbidden), hence the recursive call of (1) that will be made at the point will be with propositionally inconsistent set as 
an argument and thus will not make further calls of (2). 

Since each individual procedure (without recursive calls) in our computation clearly is $\mathsf{PSPACE}$ in $|\delta|$, and the depth of 
calls have linear in $|\delta|$ upper bound, the whole procedure (1) (accounting recursive calls) is $\mathsf{PSPACE}$ and of course 
terminates.

Now the only thing left for us to verify is that the descriptions of what we are computing in our algorithm are indeed correct. Since we 
already know that our computation terminates, it is enough to show that our algorithm works locally correct, i.e. that assuming that further 
calls do what we describe that they are doing, the call under consideration also computes what we want it to compute. Formally, we prove the 
correctness of descriptions by induction on depth of recursive calls (the base case are terminal calls, i.e.\ leaf calls in the execution 
tree).

The fact that the description of (1) is correct follows from Lemma \ref{algcor1}, that the description of (1) is correct follows from Lemma 
\ref{algcor2}, and that the description of (1) is correct follows from Lemma \ref{algcor3}.

Thus, there is a model of $\phi$ if and only if (1) returns a positive answer given $\{\phi\}$ as an input.
\end{proof}

Bou and Joosten \cite{bou-joosten} proved that the decidability problem for the closed fragment of \il{} is \texttt{PSPACE}--hard. Together 
with the previous theorem, this implies the following.

\begin{corollary}
The decidability problem of the logic \il{} is  \texttt{PSPACE}--complete.
\end{corollary}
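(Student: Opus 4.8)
The plan is to assemble \texttt{PSPACE}-completeness from its two constituent halves: membership of the decision problem in \texttt{PSPACE}, which is furnished by the preceding Theorem, and \texttt{PSPACE}-hardness, which is furnished by the cited result of Bou and Joosten~\cite{bou-joosten}. A decision problem is \texttt{PSPACE}-complete precisely when it lies in \texttt{PSPACE} and every problem in \texttt{PSPACE} reduces to it in polynomial time; so once both halves are in place the corollary follows immediately, and the body of the argument is just their juxtaposition.

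For membership I would invoke the Theorem directly. The algorithm (1) decides, in space polynomial in $|\delta|$, whether an \il{}-formula $\delta$ has a rooted Veltman model, i.e.\ it decides satisfiability. The decision problem for \il{} asks whether a given $\varphi$ is a theorem, which by soundness and completeness with respect to (finite) Veltman models (De~Jongh--Veltman~\cite{deJongh-Veltman}) is equivalent to the validity of $\varphi$, hence to the unsatisfiability of $\lnot\varphi$. Thus theoremhood is exactly the complement of the satisfiability of $\lnot\varphi$. Since \texttt{PSPACE} is a deterministic space class it is closed under complementation (that is, co-\texttt{PSPACE} coincides with \texttt{PSPACE}: one simply flips the accept/reject outcome of a deterministic polynomial-space machine), so the decision problem for \il{} lies in \texttt{PSPACE} as well.

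For hardness I would appeal to Bou and Joosten, who show that the closed fragment of \il{}, namely the formulas built with no propositional variables, is already \texttt{PSPACE}-hard. It then remains only to transfer this to the full logic. Every closed \il{}-formula is in particular an \il{}-formula, so the identity map is a (trivially polynomial-time) reduction of the closed-fragment decision problem to the full decision problem; any procedure deciding theoremhood for all \il{}-formulas decides it in particular for closed ones. Consequently the full decision problem inherits the \texttt{PSPACE}-hardness of its restriction, and combining this with the membership established above yields \texttt{PSPACE}-completeness.

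I expect no genuine obstacle in this combination step itself; the genuinely difficult content sits entirely inside the two imported results. On the upper-bound side the work is the recursion-depth and space analysis carried out in the Theorem, and on the lower-bound side the work, done in~\cite{bou-joosten} and not reproved here, is the construction of a polynomial-time reduction from a canonical \texttt{PSPACE}-complete problem (for instance quantified Boolean formula validity, or acceptance by a polynomial-space Turing machine) into closed \il{}-formulas. The only point in the present argument requiring any care is the hardness-transfer step: one must observe that hardness of a \emph{subclass} propagates to the whole class only because the subclass embeds into it via a polynomial-time (here identity) reduction, which is precisely what holds for the closed fragment sitting inside the full language.
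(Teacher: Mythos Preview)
Your proposal is correct and matches the paper's approach exactly: the paper states the corollary as an immediate consequence of combining the preceding \texttt{PSPACE} upper bound with the Bou--Joosten hardness result for the closed fragment. You have simply made explicit the two connecting observations (co-\texttt{PSPACE} $=$ \texttt{PSPACE}, and that hardness of the closed fragment transfers upward via the identity reduction) that the paper leaves implicit.
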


It is natural to ask whether this result extends to other interpretability logics. Perhaps the best candidates for the future research are 
interpretability logics that are known to be decidable. These are (to the best of our knowledge) \il{P} 
(\cite{deJongh-Veltman}), \il{M} (\cite{deJongh-Veltman}), \il{W} (\cite{deJongh-Veltman-1998}), 
\il{M$_0$} (\cite{nas-clanak}) and \il{W$^*$} (\cite{nas-clanak}). 

Note also that in \cite{nas-clanak} the decidability of certain logics was proved using generalized Veltman semantics, in which $S_w$-
successors are sets of 
worlds. Therefore an adaptation of the technique of this paper should take that into consideration.

\end{document}